\theoremstyle{plain}\newtheorem{thm}{Theorem}[section]
\newtheorem{prop}[thm]{Proposition}
\theoremstyle{definition}\newtheorem{defn}[thm]{Definition}
\newtheorem{rem}[thm]{Remark}
\newcommand{\Z}{\mathbb{Z}}\newcommand{\Q}{\mathbb{Q}}
\newcommand{\R}{\mathbb{R}}\newcommand{\C}{\mathbb{C}}
\newcommand{\U}{\mathbb{U}}\newcommand{\D}{\mathbb{D}}
\newcommand{\QP}{\mathbb{Q}\mathrm{P}}
\newcommand{\RP}{\mathbb{R}\mathrm{P}}
\newcommand{\CP}{\mathbb{C}\mathrm{P}}
\newcommand{\ass}{\mathsf{As}}\newcommand{\cyc}{\mathsf{Cy}}
\newcommand{\aut}{\mathsf{Aut}}
\newcommand{\into}{\hookrightarrow}
\newcommand{\poly}[2]{(0:#2) \foreach \x in {1,2,...,#1} {--({\x*360/#1}:#2)}--cycle;}
\newcommand{\uarc}[2]{\draw (#1*1cm,0) arc (0:180:#2*5mm);}
\newcommand{\upt}[2]{\node[label=below:$\frac{#1}{#2}$] at (#1/#2,0) {};}
\newcommand{\darc}[2]{\draw (#1:1cm) arc ({#1+270}:{#1+#2+90}:{tan(#2/2)*1cm});}
\newcommand{\rarc}[1]{\draw (0:1cm) arc (270:{90+atan((2*#1)/(#1*#1-1))}:{1/#1*1cm});}
\newcommand{\rrarc}[2]{\begin{scope}[rotate={atan(#2)}] \rarc{#1} \end{scope}}
\newcommand{\drawDot}[1]{\filldraw[black] #1 circle (1.2pt);}
\newcommand{\drawPoly}[1]{\fill[color=blue!20] #1; \draw #1;}
\begin{document}
%%%%%%%%%%%%%%%%

\title{The infinite cyclohedron and its automorphism group} 
\author{Ariadna Fossas Tenas}
\address{EPFL, Lausanne, Switzerland CH-1015}
\email{ariadna.fossastenas@epfl.ch}
\thanks{Research by the first author was supported by ERC grant
agreement number 267635 - RIGIDITY}

\author{Jon McCammond}
\address{Dept. of Math., University of California, Santa Barbara, CA 93106} 
\email{jon.mccammond@math.ucsb.edu}

\date{\today}

\begin{abstract}
  Cyclohedra are a well-known infinite familiy of finite-dimensional
  polytopes that can be constructed from centrally symmetric
  triangulations of even-sided polygons.  In this article we introduce
  an infinite-dimensional analogue and prove that the group of
  symmetries of our construction is a semidirect product of a
  degree~$2$ central extension of Thompson's infinite finitely
  presented simple group~$T$ with the cyclic group of order~$2$.
  These results are inspired by a similar recent analysis by the first
  author of the automorphism group of an infinite-dimensional
  associahedron.
\end{abstract}

\subjclass[2010]{20F65, 20F38, 52B12, 57Q05} 
\keywords{Thompson's group T, cyclohedra, associahedra, Farey
  tessellation, infinite-dimensional polytopes}

\maketitle

Associahedra and cyclohedra are two well-known families of
finite-dimensional polytopes with one polytope of each type in each
dimension.  In \cite{Fo-assoc} the first author constructed an
infinite-dimensional analogue $\ass^\infty$ of the associahedra
$\ass^n$ and proved that its automorphism group is a semidirect
product of Richard Thompson's infinite finitely presented simple group
$T$ with the cyclic group of order~$2$.  In this article we extend
this analysis to an infinite-dimensional analogue $\cyc^\infty$ of the
cyclohedra $\cyc^n$.  Its symmetry group is a semidirect product of a
degree~$2$ central extension of Thompson's group~$T$ with the cyclic
group of order~$2$.  In atlas notation (which we recall in
Section~\ref{sec:thompson}), the symmetry group of the infinite
associahedron has shape $T.2$ and the symmetry group of the infinite
cyclohedron has shape $2.T.2$.

\medskip
The article is structured as follows.  The first two sections recall
basic properties of associahedra and cyclohedra, especially the way
their faces correspond to partial triangulations of convex polygons.
Section~\ref{sec:farey} reviews the Farey tessellation of the
hyperbolic plane as a convenient tool for organizing our
constructions.  The infinite-dimensional polytopes $\ass^\infty$ and
$\cyc^\infty$ are constructed in Section~\ref{sec:infinite}, and the
final sections use the close connection between Thompson's group $T$
and the Farey tessellation to analyze the symmetry group of
$\cyc^\infty$.

%%%%%%%%%%%%%%%%%%%%%%%%%%%%%%%%%%%%%%%%%%%%%%
\section{Associahedra}\label{sec:associahedra}
%%%%%%%%%%%%%%%%%%%%%%%%%%%%%%%%%%%%%%%%%%%%%%

In this section we recall the basic properties of associahedra.  We
begin with the construction of their face lattice.

\begin{defn}[Diagonals]
  Let $P$ be a convex polygon with $n$ vertices.  Of the
  $\binom{n}{2}$ edges connecting distinct vertices of $P$, $n$ of
  them are \emph{boundary edges} connecting vertices that occur
  consecutively in the boundary of $P$.  The other edges are called
  \emph{diagonals}.  Two diagonals are said to \emph{cross} if they
  intersect in the interior of $P$ and they are \emph{noncrossing}
  otherwise, even if they have an endpoint in common.
\end{defn}

\begin{defn}[Triangulations]
  A \emph{partial triangulation of $P$} is given by its boundary edges
  plus a possibly empty collection of pairwise noncrossing diagonals.
  A \emph{triangulation of $P$} is a maximal partial triangulation, a
  condition equivalent to the requirement that every complementary
  region is a triangle.  The number of triangulations is given by the
  Catalan numbers.  We order the partial triangulations of $P$ by
  reverse inclusion so that the triangulations are minimal elements
  and the partial triangulation with no diagonals is the unique
  maximal element.
\end{defn}

\begin{figure}
  \begin{tikzpicture}[join=round]
    \def\Ra{2cm} \def\Rb{1.618cm} \def\ra{3mm} \def\rb{1.5cm}
    \begin{scope}[xshift=-2cm]
      \draw[-,thick] (0,1)--(0,-1);
      \drawDot{(0,1)} \drawDot{(0,-1)}
      \def\p{[rotate=45] \poly{4}{\ra}}
      \begin{scope}[xshift=-5mm] \drawPoly{\p} \end{scope}
      \begin{scope}[yshift=-1.5cm] \drawPoly{\p} \draw (45:\ra)--(225:\ra); \end{scope}
      \begin{scope}[yshift=1.5cm] \drawPoly{\p} \draw (-45:\ra)--(135:\ra); \end{scope}
    \end{scope}

    \begin{scope}[xshift=2cm]
      \def\c{[rotate=90] \poly{5}{\rb}}
      \fill[color=yellow!30] \c; \draw \c;
      \foreach \x in {1,2,3,4,5} {\drawDot{(90+72*\x:\rb)}}
      \def\p{[rotate=270] \poly{5}{\ra}} \drawPoly{\p}
      \foreach \x in {0,1,2,3,4} {
        \begin{scope}[shift={+(90+72*\x:\Ra)},rotate={144*\x}]
          \drawPoly{\p} \draw (126:\ra)--(270:\ra)--(54:\ra);
        \end{scope}
        \begin{scope}[shift={+(270+72*\x:\Rb)},rotate={144*\x}]
          \drawPoly{\p} \draw (198:\ra)--(342:\ra);
        \end{scope}}
    \end{scope}
  \end{tikzpicture}
  \caption{The low dimensional associahedra $\ass^1$ and $\ass^2$.\label{fig:assoc}}
\end{figure}
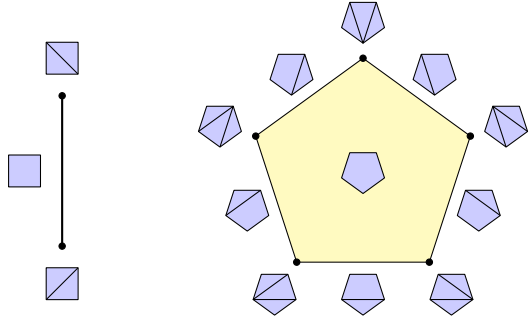

Although it is not immediately obvious, it turns out that the poset of
partial triangulations of a convex $n$-sided polygon is the face
lattice of a simple $(n-3)$-dimensional polytope called an
\emph{associahedron}.  In low-dimensions we see that a square has two
triangulations, a pentagon has five and a hexagon has fourteen.  The
corresponding $1$-, $2$-, and $3$-dimensional associahedra with $2$,
$5$ and $14$ vertices, respectively, are shown in
Figures~\ref{fig:assoc} and~\ref{fig:assoc-3}.  Although there have
been many distinct polytopal realizations of associahedra over the
years, we confine ourselves to a brief description of one particular
construction: the associahedron as the secondary polytope of a regular
convex polygon, a very general construction developed by Gelfand,
Kapranov and Zelevinsky \cite{GeKaZe89, GeKaZe90, GeKaZe94}.

\begin{figure}
  \begin{tikzpicture}[join=round,x={(.9,0)},z={(-.25,-.15)},scale=1.2]
    \path 
    (2,0,0) coordinate (1) {} 
    (1,0,{sqrt(3)}) coordinate (2) {}
    (-1,0,{sqrt(3)}) coordinate (3) {}
    (-2,0,0) coordinate (4) {}
    (-1,0,{-sqrt(3)}) coordinate (5) {}
    (1,0,{-sqrt(3)}) coordinate (6) {}
    (0,1,{sqrt(3)}) coordinate (7) {}
    (0,-1,{sqrt(3)}) coordinate (8) {}
    (-1.5,1,{-sqrt(3)/2}) coordinate (9) {}
    (-1.5,-1,{-sqrt(3)/2}) coordinate (10) {}
    (1.5,1,{-sqrt(3)/2}) coordinate (11) {}
    (1.5,-1,{-sqrt(3)/2}) coordinate (12) {}
    (0,2,0) coordinate (13) {}
    (0,-2,0) coordinate (14) {}
    ;
    \def\c{(1)--(12)--(14)--(10)--(4)--(9)--(13)--(11)--cycle};
    \fill[color=yellow!30] \c; \draw \c;
     \begin{scope}[color=blue!50,dashed]
       \draw (5)--(6);
       \draw (9)--(5)--(10);
       \draw (12)--(6)--(11);
       \filldraw[blue] (5) circle (1.2pt);
       \filldraw[blue] (6) circle (1.2pt);
     \end{scope}
     \begin{scope}[thick]
       \draw (1)--(2) (3)--(4);
       \draw (2)--(7)--(3)--(8)--(2);
       \draw (7)--(13) (8)--(14);
     \end{scope}
     \foreach \x in {1,2,3,4,7,8,...,14} {\drawDot{(\x)}}
     \def\ra{2mm} \def\p{\poly{6}{\ra}}
     \begin{scope}[shift={(2.4,0,0)}] 
       \drawPoly{\p} \draw (240:\ra)--(0:\ra)--(180:\ra)--(60:\ra);
     \end{scope}
     \begin{scope}[shift={(-2.4,0,0)}] 
       \drawPoly{\p} \draw (240:\ra)--(120:\ra)--(300:\ra)--(60:\ra);
     \end{scope}
     \begin{scope}[shift={(0,0,{sqrt(3)})}] 
       \drawPoly{\p} \draw (60:\ra)--(240:\ra);
     \end{scope}
     \begin{scope}[shift={(0,2.4,0)}] 
       \drawPoly{\p} \draw (240:\ra)--(0:\ra)--(120:\ra)--cycle;
     \end{scope}
     \begin{scope}[shift={(0,-2.4,0)}] 
       \drawPoly{\p} \draw (60:\ra)--(180:\ra)--(300:\ra)--cycle;
     \end{scope}
     \begin{scope}[shift={(.9,.8,{.3*sqrt(3)})}]
       \drawPoly{\p} \draw (0:\ra)--(240:\ra);
     \end{scope}
     \begin{scope}[shift={(.9,-.8,{.3*sqrt(3)})}] 
       \drawPoly{\p} \draw (180:\ra)--(60:\ra);
     \end{scope}
     \begin{scope}[shift={(-.9,.8,{.3*sqrt(3)})}] 
       \drawPoly{\p} \draw (120:\ra)--(240:\ra);
     \end{scope}
     \begin{scope}[shift={(-.9,-1.8,{-.3*sqrt(3)})}] 
       \drawPoly{\p} \draw (60:\ra)--(300:\ra)--(180:\ra);
     \end{scope}
     \begin{scope}[shift={(.9,-1.8,{-.3*sqrt(3)})}] 
       \drawPoly{\p} \draw (60:\ra)--(180:\ra)--(300:\ra);
     \end{scope}
     \begin{scope}[shift={(.9,1.8,{-.3*sqrt(3)})}] 
       \drawPoly{\p} \draw (120:\ra)--(0:\ra)--(240:\ra);
     \end{scope}
     \begin{scope}[shift={(1.8,1.2,{-.6*sqrt(3)})}] 
       \drawPoly{\p} \draw (120:\ra)--(0:\ra)--(240:\ra) (180:\ra)--(0:\ra);
     \end{scope}
     \begin{scope}[shift={(1.8,-1.2,{-.6*sqrt(3)})}] 
       \drawPoly{\p} \draw (60:\ra)--(180:\ra)--(300:\ra) (180:\ra)--(0:\ra);
     \end{scope}
     \begin{scope}[shift={(-1.8,1.2,{-.6*sqrt(3)})}] 
       \drawPoly{\p} \draw (0:\ra)--(120:\ra)--(240:\ra) (120:\ra)--(300:\ra);
     \end{scope}
     \begin{scope}[shift={(-1.8,-1.2,{-.6*sqrt(3)})}] 
       \drawPoly{\p} \draw (60:\ra)--(300:\ra)--(180:\ra) (120:\ra)--(300:\ra);
     \end{scope}
  \end{tikzpicture}
  \caption{The $3$-dimensional associahedron $\ass^3$ with some of its
    face labels.\label{fig:assoc-3}}
\end{figure}
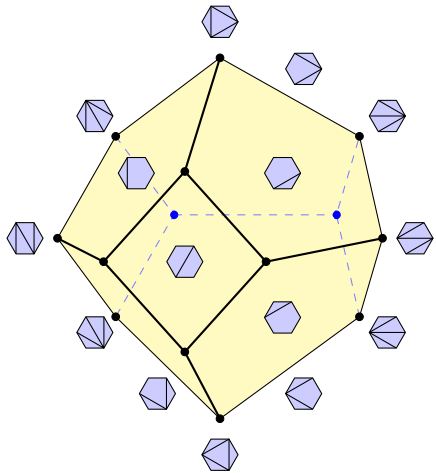

\begin{defn}[Secondary polytopes]
  The general construction of a secondary polytope goes as follows.
  Let $P$ be a $d$-dimensional convex polytope with $n+d+1$ vertices.
  For each triangulation $t$ of $P$, one defines a \emph{GKZ vector}
  $v(t) \in \R^{n+d+1}$ as the vector whose $i$-th coordinate is the
  sum of the volumes of the top-dimensional simplices in the
  triangulation $t$ that contain the $i$-th vertex of $P$.  The convex
  hull of the GKZ vectors for all possibly triangulations of $P$
  produces an $n$-dimensional polytope called its \emph{secondary
    polytope}.
\end{defn}

In their articles, Gelfand, Kapranov and Zelevinsky prove that the
secondary polytope of a convex polygon is an associahedron.  The
reader should note that the indices shift during the construction: the
regular polygon with $n$ vertices produces the associahedron of
dimension $n-3$.  Secondary polytopes were later generalized by
Billera and Sturmfels to produce fiber polytopes \cite{BiSt92} and
both of these constructions are described in Ziegler's book on
polytopes \cite{Zi95}.  For a survey of other polytopal realizations
of associahedra and for further references to the literature, see the
excellent recent article by Ceballos, Santos and Ziegler
\cite{CeSaZi13}.  In this article, we shall insist that our
associahedra are constructed as metric secondary polytopes from
regular polygons of a specific size.

\begin{defn}[Associahedra]\label{def:assoc}
  A \emph{standard regular polygon} is one obtained as the convex hull
  of equally spaced points on the unit circle (i.e. a regular polygon
  circumscribed by the unit circle) and a \emph{standard
    associahedron} is the metric polytopal realization of the
  associahedron obtained by applying the secondary polytope
  construction to a standard regular polygon.  We write $\ass^n$ to
  denote the standard associahedron of dimension~$n$, following the
  convention that uses superscripts to indicate the dimension of a
  polytope.
\end{defn}

By standardizing metrics in this way, the standard associahedra are
symmetric polytopes and there exist some isometric embeddings between
them.

\begin{prop}[Isometries and embeddings]\label{prop:ass-isom-embed}
  Every isometry of the standard regular polygon $P_n$ extends to an
  isometry of the standard associahedron $\ass^{n-3}$, which means
  that the dihedral group of size $2n$ acts isometrically on
  $\ass^{n-3}$.  In addition, the natural inclusion map $P_n \into
  P_{2n}$ which sends the vertices of $P_n$ to every other vertex of
  $P_{2n}$ induces an isometric inclusion of $\ass^{n-3}$ into
  $\ass^{2n-3}$ as one of its faces.
\end{prop}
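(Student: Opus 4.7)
The plan is to work directly with the secondary-polytope construction and track how the symmetries of $P_n$ and the vertex-preserving inclusion $P_n\into P_{2n}$ act on the GKZ vectors of the relevant triangulations.

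For the first claim, let $\sigma$ be a symmetry of $P_n$, and let $\sigma$ also denote its action on $\R^n$ that permutes coordinates according to its permutation of the vertices of $P_n$. This coordinate permutation is an isometry of $\R^n$. Because $\sigma$ preserves areas and sends each triangulation $t$ to another triangulation $\sigma(t)$, one checks coordinate by coordinate that $v(\sigma(t))=\sigma\cdot v(t)$. Hence the finite set of GKZ vectors is permuted by $\sigma$, so its convex hull $\ass^{n-3}$ is invariant, and the restricted action is an isometric action of the order-$2n$ dihedral group on $\ass^{n-3}$.

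For the second claim, let $\tau_0$ be the partial triangulation of $P_{2n}$ whose diagonals are exactly the $n$ edges of the inscribed copy of $P_n$. The complementary regions of $\tau_0$ are $n$ congruent ``ear'' triangles, one at each vertex of $P_{2n}$ not in $P_n$, together with an inner region equal to $P_n$. Any partial triangulation refining $\tau_0$ is obtained by further triangulating this inner $P_n$, so refinements of $\tau_0$ are in order-preserving bijection with partial triangulations of $P_n$. This already identifies the face of $\ass^{2n-3}$ indexed by $\tau_0$ combinatorially with $\ass^{n-3}$.

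To promote this identification to an isometry, I would compute the GKZ vectors explicitly. Label the vertices of $P_{2n}$ by $1,\ldots,2n$ so that the odd-indexed vertices are those of $P_n$. For every triangulation $t$ of $P_{2n}$ refining $\tau_0$, each even-indexed coordinate of $v(t)\in\R^{2n}$ is the area of the unique ear containing that vertex, while each odd-indexed coordinate is the sum of the two adjacent ear areas plus the corresponding coordinate of $v(t')\in\R^n$, where $t'$ is the restriction of $t$ to $P_n$. Thus $v(t)=c+\iota(v(t'))$ for a fixed vector $c\in\R^{2n}$ and the coordinate-subspace inclusion $\iota\colon\R^n\into\R^{2n}$ onto the odd-indexed coordinates. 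Since $\iota$ is a linear isometry, taking convex hulls yields an isometric embedding of $\ass^{n-3}$ onto the face of $\ass^{2n-3}$ indexed by $\tau_0$. The main obstacle is essentially bookkeeping: one must verify that the ear areas really contribute identical constants across every refinement of $\tau_0$ (immediate, since each ear is a simplex of every such refinement) and that triangle areas inside $P_n$ are computed identically whether one views $P_n$ as a stand-alone polygon or as a subset of $P_{2n}$ (immediate from the standardization, which places both polygons inside the unit circle in the same ambient plane). Once these compatibilities are noted, both assertions follow.
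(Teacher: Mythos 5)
Your argument is correct and follows essentially the same route as the paper: the symmetry claim via the coordinate-permutation action on GKZ vectors, and the embedding claim by identifying refinements of the inscribed-$P_n$ partial triangulation with triangulations of $P_n$ and observing that the GKZ vectors differ from $\iota(v(t'))$ by a fixed translation coming from the $n$ exterior ``ear'' triangles. Your version simply spells out the bookkeeping (the explicit form of the constant vector $c$ and the compatibility of areas under the standard metric normalization) that the paper leaves implicit.
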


\begin{proof}
  The first assertion is immediate from the definition of the
  construction.  For the second assertion we identify the (partial)
  triangulations of $P_n$ with (partial) triangulations of $P_{2n}$
  that include the diagonals that are images of the boundary edges of
  $P_n$.  In the coordinates of the construction, this involves
  including $\R^n$ into $\R^{2n}$ and translating by a vector which
  records the contributions of the $n$ triangles that are outside the
  image of $P_n$.  Since the same $n$ exterior triangles always occur,
  this is a pure translation and the embedding is an isometry.
\end{proof}

\begin{rem}[Other isometric embeddings]
  There exists an isometric embedding of $\ass^{n-3}$ into
  $\ass^{m-3}$ whenever $n$ divides $m$, but when $\frac{m}{n}$ is
  greater than $2$, one needs to first choose a consistent
  triangulation of the portion of $P_m$ exterior to the image of $P_n$
  in order to define the map.  In other words, there are multiple
  isometric embeddings of $\ass^{n-3}$ into $\ass^{m-3}$ in this case
  as opposed to the canonical embedding described in the proposition.
\end{rem}

%%%%%%%%%%%%%%%%%%%%%%%%%%%%%%%%%%%%%%%%%%
\section{Cyclohedra}\label{sec:cyclohedra}
%%%%%%%%%%%%%%%%%%%%%%%%%%%%%%%%%%%%%%%%%%

In this section we shift our attention from associahedra to
cyclohedra.  The cyclohedra were initially investigated by Raul Bott
and Clifford Taubes in \cite{BoTa94} in a slightly different guise
before being rediscovered by Rodica Simion in \cite{Si03}.  In the
same way that the faces of an associahedron correspond to (partial)
triangulations of a polygon, the faces of a cyclohedron correspond to
centrally symmetric (partial) triangulations of an even-sided polygon.

\begin{defn}[Centrally symmetric triangulations]
  Let $P=P_{2n}$ be a standard $2n$-sided polygon and consider the
  $\pi$-rotation of $P$ that sends the vertex $v_i$ to the vertex
  $v_{i+n}$ (where vertices are numbered modulo $2n$ in the order they
  occur in the boundary of $P$).  As an isometry it sends every vector
  to its negative, i.e. it sends $x+iy$ to $-x-iy$.  A partial
  triangulation of $P$ is said to be \emph{centrally symmetric} when
  it is invariant under this $\pi$-rotation.  The centrally symmetric
  partial triangulations form a poset as before under reverse
  inclusion on the sets of diagonals used to define them.  The minimal
  elements of this poset are the \emph{centrally symmetric
    triangulations}.
\end{defn}

The poset of centrally symmetric partial triangulations of $P_{2n}$ is
the face lattice of a simple $(n-1)$-dimensional polytope called a
\emph{cyclohedron}.

\begin{defn}[Cyclohedra]
  As should be clear from the definitions, the vertices of the
  cyclohedron can be identified with a subset of the vertices of the
  associahedron built from the same regular polygon.  More precisely,
  the triangulations of a standard polygon with $2n$ sides index the
  vertices of the associahedron $\ass^{2n-3}$ and the subset of
  centrally symmetric ones index the vertices of cyclohedron whose
  dimension is merely $n-1$.  An additional reason for using the
  secondary polytope construction to build the associahedron
  $\ass^{2n-3}$ is that the convex hull of its vertices with centrally
  symmetric labels is a polytopal realization of the
  $(n-1)$-dimensional cyclohedron.  We call this the \emph{standard
    cyclohedron} in dimension $n-1$ and we write $\cyc^{n-1}$ for this
  polytope.
\end{defn}

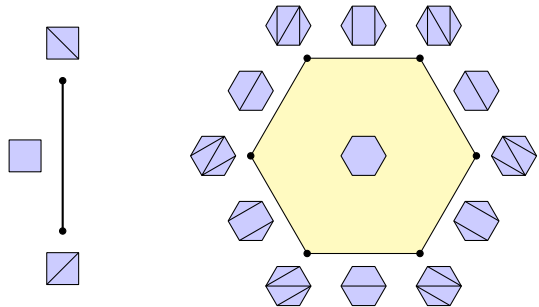
\begin{figure}
  \begin{tikzpicture}[join=round]
    \def\Ra{2cm} \def\Rb{1.732cm} \def\ra{3mm} \def\rb{1.5cm}
    \begin{scope}[xshift=-2cm]
      \node (a) at (0,-1) {};
      \node (b) at (0,1) {};
      \draw[-,thick] (0,1)--(0,-1);
      \foreach \x in {a,b} {\drawDot{(\x)}}
      \def\p{\poly{4}{\ra}}
      \begin{scope}[yshift=-1.5cm,rotate=45]
        \fill[color=blue!20] \p; \draw \p;
        \draw (0:\ra)--(180:\ra);
      \end{scope}
      \begin{scope}[yshift=1.5cm,rotate=45]
        \fill[color=blue!20] \p; \draw \p;
        \draw (90:\ra)--(270:\ra);
      \end{scope}
      \begin{scope}[xshift=-5mm,rotate=45]
        \fill[color=blue!20] \p; \draw \p;
      \end{scope}
    \end{scope}

    \begin{scope}[xshift=2cm]
      \def\c{\poly{6}{\rb}}
      \def\p{\poly{6}{\ra}}
      \fill[color=yellow!30] \c; \draw \c;
      \foreach \x in {1,2,...,6} {\drawDot{(60*\x:1.5cm)}}
      \fill[color=blue!20] \p; \draw \p;
      \foreach \x in {0,120,240} {\begin{scope}[rotate=\x]
          \begin{scope}[shift={+(0:\Ra)}]
            \fill[color=blue!20] \p; \draw \p;
            \draw (0:\ra)--(120:\ra)--(300:\ra)--(180:\ra);
          \end{scope}
          \begin{scope}[shift={+(30:\Rb)}]
            \fill[color=blue!20] \p; \draw \p;
            \draw (120:\ra)--(300:\ra);
          \end{scope}
          \begin{scope}[shift={+(60:\Ra)}]
            \fill[color=blue!20] \p; \draw \p;
            \draw (240:\ra)--(120:\ra)--(300:\ra)--(60:\ra);
          \end{scope}
          \begin{scope}[shift={+(90:\Rb)}]
            \fill[color=blue!20] \p; \draw \p;
            \draw (300:\ra)--(60:\ra) (240:\ra)--(120:\ra);
          \end{scope}
      \end{scope}}
    \end{scope}
  \end{tikzpicture}
  \caption{The low-dimensional cyclohedra $\cyc^1$ and $\cyc^2$.\label{fig:cyclo}}
\end{figure}

We should note that for many of the polytopal realizations of the
associahedron, the convex hull of the vertices with centrally
symmetric labels is not a cyclohedron.  The convex hull has a
different combinatorial type and often a different and higher
dimension \cite{HoLa07}.  Using the secondary polytope metric, the
cyclohedron $\cyc^{n-1}$ inside the associahedron $\ass^{2n-3}$ can
also be described in terms of its isometries.

\begin{prop}[Fixed subpolytope]\label{prop:fixed}
  Let $P$ be a standard $2n$-sided polygon and let $\ass^{2n-3}$ be
  the standard associahedron constructed from the triangulations of
  $P$.  The $\pi$-rotation of $P$ induces an isometric involution
  $\tau$ that acts on $\R^{2n}$ by permuting coordinates, it stablizes
  $\ass^{2n-3} \subset \R^{2n}$ and the intersection of the fixed set
  of $\tau$ with $\ass^{2n-3}$ is the cyclohedron $\cyc^{n-1}$.
\end{prop}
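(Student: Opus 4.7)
The plan is to prove the three claims of the proposition in turn, with most of the work falling on the last. For the action of $\tau$, I would unpack the GKZ construction: the $i$-th coordinate of $v(t)\in\R^{2n}$ is a sum indexed by the triangles of $t$ incident to the $i$-th vertex of $P$. Since the $\pi$-rotation sends vertex $i$ to vertex $i+n$ and sends the triangulation $t$ to its rotated image, we obtain $v(\tau(t))_{i+n}=v(t)_i$, so the induced map on $\R^{2n}$ is the coordinate permutation $x_i\leftrightarrow x_{i+n}$: an orthogonal linear involution. It carries each vertex of $\ass^{2n-3}$ to another vertex, and hence stabilises the polytope.

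For the fixed-set identification, a vertex $v(t)$ is $\tau$-fixed iff $\tau(t)=t$, iff $t$ is centrally symmetric, so the vertices of $\cyc^{n-1}$ are precisely the $\tau$-fixed vertices of $\ass^{2n-3}$. The inclusion $\cyc^{n-1}\subseteq\mathrm{Fix}(\tau)\cap\ass^{2n-3}$ is then immediate, since $\mathrm{Fix}(\tau)$ is a linear subspace and therefore convex.

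For the reverse inclusion, I would induct on $n$, exploiting the product structure of the face lattice. Given a $\tau$-fixed point $p\in\ass^{2n-3}$, let $F$ be the minimal face containing it; $F$ is $\tau$-invariant, so the corresponding partial triangulation $\pi_F$ is centrally symmetric, and $F$ is combinatorially a product $\prod_R\ass^{|R|-3}$ indexed by the regions $R$ of $\pi_F$. These regions come in $\tau$-pairs $\{R,\tau(R)\}$ with $R\neq\tau(R)$, together with at most one self-symmetric region $R_0$. On a swapped pair of factors the $\tau$-fixed set is the twisted diagonal, a convex hull of twisted vertex pairs $(v,\tau\cdot v)$ corresponding to centrally symmetric labellings of the pair. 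On a self-symmetric region $R_0$ (necessarily even-sided, and with strictly fewer than $2n$ sides whenever $\pi_F\neq\emptyset$), $\tau$ acts as the $\pi$-rotation of $R_0$, and by the inductive hypothesis the fixed set is the smaller cyclohedron, itself a convex hull of centrally symmetric refinements. Combining factors, the $\tau$-fixed set in $F$ is the convex hull of the $\tau$-fixed vertices of $F$, each of which lies in $\cyc^{n-1}$; hence $p\in\cyc^{n-1}$.

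The principal obstacle is the case $\pi_F=\emptyset$, when $p$ lies in the interior of $\ass^{2n-3}$ itself and the inductive decomposition offers no smaller pieces. The plan here is to apply the averaging trick: writing $p=\sum_t\lambda_t v(t)$ with $\lambda_t=\lambda_{\tau(t)}$ (achievable by replacing $\lambda_t$ with $\tfrac12(\lambda_t+\lambda_{\tau(t)})$, since $p=\tau(p)$), the sum regroups into a convex combination of $\tau$-fixed vertices and midpoints $\tfrac12(v(t)+v(\tau(t)))$ for orbits of size two. Each such midpoint lies in the face of $\ass^{2n-3}$ indexed by the centrally symmetric partial triangulation whose diagonals are those common to $t$ and $\tau(t)$, and when this intersection is nonempty the inductive analysis above handles the midpoint. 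The residual case, when $t$ and $\tau(t)$ share no diagonals, requires a direct verification that the midpoint is still a convex combination of centrally symmetric GKZ vectors; this is the subtlest step, and I expect to handle it by exploiting the explicit form of the GKZ volumes together with the full dihedral symmetry of the standard regular polygon acting on $\ass^{2n-3}$.
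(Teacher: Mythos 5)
Your first two steps are fine, and they are all the paper itself makes explicit (the paper states this proposition without proof, treating it as part of the secondary polytope package): $\tau$ acts by the coordinate permutation $x_i\leftrightarrow x_{i+n}$, it permutes GKZ vectors, the $\tau$-fixed vertices are exactly those labelled by centrally symmetric triangulations, and $\cyc^{n-1}\subseteq\operatorname{Fix}(\tau)\cap\ass^{2n-3}$ by convexity. You also correctly see where the real content lies: since $\operatorname{Fix}(\tau)\cap\ass^{2n-3}=\pi(\ass^{2n-3})$ with $\pi=\tfrac12(1+\tau)$, everything comes down to showing that the midpoints $\tfrac12\bigl(v(t)+v(\tau(t))\bigr)$ lie in the convex hull of the fixed vertices. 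This is genuinely nontrivial: for a general polytope with an involutive symmetry the fixed subpolytope is strictly larger than the hull of the fixed vertices (a tetrahedron with two pairs of vertices swapped has a fixed segment containing no vertex at all), so no amount of convexity or symmetry bookkeeping alone can finish the argument.

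And that is exactly where your proposal has a gap. The face induction disposes only of the easy cases: when the minimal face containing the fixed point is proper, or when $t\cap\tau(t)\neq\emptyset$ (note also that the self-symmetric region $R_0$ is in general not a regular polygon, so your inductive statement must be formulated for arbitrary centrally symmetric configurations in convex position --- fixable, but as written the induction does not literally apply). The decisive case, a midpoint with $t\cap\tau(t)=\emptyset$ --- which already occurs for the hexagon, with the two ``inner triangle'' triangulations --- is precisely the substance of the proposition, and ``I expect to handle it by exploiting the explicit form of the GKZ volumes together with the full dihedral symmetry'' is a hope, not an argument; dihedral symmetry says nothing about where one particular midpoint sits relative to the hull of the fixed vertices. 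The standard way to close this (essentially Reiner's equivariant fiber/secondary polytope theorem) is via coherent subdivisions: a vertex of $\pi(\ass^{2n-3})$ maximizes a $\tau$-invariant linear functional, i.e.\ a centrally symmetric height function $\omega$ on the vertices of $P$; by GKZ theory the face of the secondary polytope selected by $\omega$ is spanned by the $v(t)$ with $t$ refining the regular subdivision $S_\omega$; since $S_\omega$ is centrally symmetric it admits a centrally symmetric refining triangulation $t_0$, so the maximizing face contains the fixed vertex $v(t_0)$, forcing the vertex of the fixed polytope to equal $v(t_0)$. Hence every vertex of $\pi(\ass^{2n-3})$ is a centrally symmetric vertex of $\ass^{2n-3}$, which is what you need; some such coherence argument (or a citation of it) has to replace your final ``direct verification.''
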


\begin{figure}
  \begin{tikzpicture}[join=round,x={(.9,0)},z={(-.25,-.15)},scale=1.2]
    \path 
    (2,0,0) coordinate (1) {} 
    (1,0,{sqrt(3)}) coordinate (2) {}
    (-1,0,{sqrt(3)}) coordinate (3) {}
    (-2,0,0) coordinate (4) {}
    (-1,0,{-sqrt(3)}) coordinate (5) {}
    (1,0,{-sqrt(3)}) coordinate (6) {}
    (0,1,{sqrt(3)}) coordinate (7) {}
    (0,-1,{sqrt(3)}) coordinate (8) {}
    (-1.5,1,{-sqrt(3)/2}) coordinate (9) {}
    (-1.5,-1,{-sqrt(3)/2}) coordinate (10) {}
    (1.5,1,{-sqrt(3)/2}) coordinate (11) {}
    (1.5,-1,{-sqrt(3)/2}) coordinate (12) {}
    (0,2,0) coordinate (13) {}
    (0,-2,0) coordinate (14) {}
    ;
    \def\c{(1)--(12)--(14)--(10)--(4)--(9)--(13)--(11)--cycle};
    \fill[color=yellow!30] \c; \draw \c;
     \begin{scope}[color=blue!50,dashed]
       \draw (5)--(6);
       \draw (9)--(5)--(10);
       \draw (12)--(6)--(11);
       \filldraw[blue] (5) circle (1.2pt);
       \filldraw[blue] (6) circle (1.2pt);
     \end{scope}
     \begin{scope}[thick]
       \draw (1)--(2) (3)--(4);
       \draw (2)--(7)--(3)--(8)--(2);
       \draw (7)--(13) (8)--(14);
     \end{scope}
     \draw[very thick, color=violet] (1)--(2)--(3)--(4);
     \draw[dashed,very thick, color=violet] (4)--(5)--(6)--(1);
     \foreach \x in {1,2,3,4,7,8,...,14} {\drawDot{(\x)}}
     \def\ra{2mm} \def\p{\poly{6}{\ra}}
     \begin{scope}[shift={(2.4,0,0)}] 
       \drawPoly{\p} \draw (240:\ra)--(0:\ra)--(180:\ra)--(60:\ra);
     \end{scope}
     \begin{scope}[shift={(-2.4,0,0)}] 
       \drawPoly{\p} \draw (240:\ra)--(120:\ra)--(300:\ra)--(60:\ra);
     \end{scope}
     \begin{scope}[shift={(0,0,{sqrt(3)})}] 
       \drawPoly{\p} \draw (60:\ra)--(240:\ra);
     \end{scope}
     \begin{scope}[shift={(0,2.4,0)}] 
       \drawPoly{\p} \draw (240:\ra)--(0:\ra)--(120:\ra)--cycle;
     \end{scope}
     \begin{scope}[shift={(0,-2.4,0)}] 
       \drawPoly{\p} \draw (60:\ra)--(180:\ra)--(300:\ra)--cycle;
     \end{scope}
     \begin{scope}[shift={(.9,.8,{.3*sqrt(3)})}]
       \drawPoly{\p} \draw (0:\ra)--(240:\ra);
     \end{scope}
     \begin{scope}[shift={(.9,-.8,{.3*sqrt(3)})}] 
       \drawPoly{\p} \draw (180:\ra)--(60:\ra);
     \end{scope}
     \begin{scope}[shift={(-.9,.8,{.3*sqrt(3)})}] 
       \drawPoly{\p} \draw (120:\ra)--(240:\ra);
     \end{scope}
     \begin{scope}[shift={(-.9,-1.8,{-.3*sqrt(3)})}] 
       \drawPoly{\p} \draw (60:\ra)--(300:\ra)--(180:\ra);
     \end{scope}
     \begin{scope}[shift={(.9,-1.8,{-.3*sqrt(3)})}] 
       \drawPoly{\p} \draw (60:\ra)--(180:\ra)--(300:\ra);
     \end{scope}
     \begin{scope}[shift={(.9,1.8,{-.3*sqrt(3)})}] 
       \drawPoly{\p} \draw (120:\ra)--(0:\ra)--(240:\ra);
     \end{scope}
     \begin{scope}[shift={(1.8,1.2,{-.6*sqrt(3)})}] 
       \drawPoly{\p} \draw (120:\ra)--(0:\ra)--(240:\ra) (180:\ra)--(0:\ra);
     \end{scope}
     \begin{scope}[shift={(1.8,-1.2,{-.6*sqrt(3)})}] 
       \drawPoly{\p} \draw (60:\ra)--(180:\ra)--(300:\ra) (180:\ra)--(0:\ra);
     \end{scope}
     \begin{scope}[shift={(-1.8,1.2,{-.6*sqrt(3)})}] 
       \drawPoly{\p} \draw (0:\ra)--(120:\ra)--(240:\ra) (120:\ra)--(300:\ra);
     \end{scope}
     \begin{scope}[shift={(-1.8,-1.2,{-.6*sqrt(3)})}] 
       \drawPoly{\p} \draw (60:\ra)--(300:\ra)--(180:\ra) (120:\ra)--(300:\ra);
     \end{scope}
  \end{tikzpicture}
  \caption{The involution $\tau$ acts on the $3$-dimensional
    associahedron $\ass^3$ as a vertical reflection through the
    equator.  The $2$-dimensional hexagonal cyclohedron $\cyc^2$ is
    visible as the intersection of $\ass^3$ with the fixed set
    of~$\tau$.\label{fig:cy2-in-ass3}}
\end{figure}
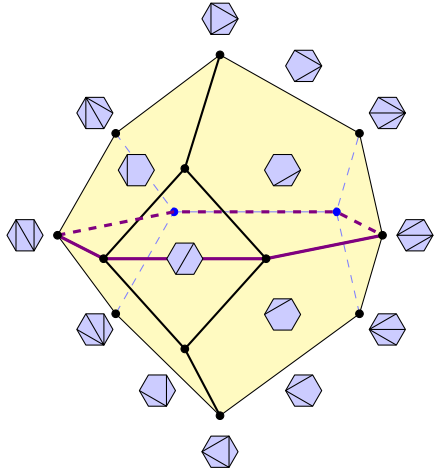

The sharp drop in dimension from the associahedron to the cyclohedron
is caused by the geometry of the involution $\tau$.

\begin{rem}[Dimension drop]\label{rem:dim-drop}
  In the secondary polytope coordinate system, the involution $\tau$
  systematically switches coordinates $x_i$ and $x_{i+n}$.  Thus,
  geometrically, it fixes an $n$-dimensional subspace of $\R^{2n}$ and
  it acts as the antipodal map on its $n$-dimensional orthogonal
  complement.  The associahedron $\ass^{2n-3}$ lives in an affine
  subspace of $\R^{2n}$ of dimension $2n-3$ and $\tau$ fixes its
  center.  If we translate the coordinate system so that the center of
  $\ass^{2n-3}$ is the new origin and restrict $\tau$ to the subspace
  containing $\ass^{2n-3}$, then $\tau$ fixes a subspace of dimension
  $n-1$ and acts as the antipodal map on its orthogonal complement of
  dimension $n-2$.  Hence the dimension of the corresponding
  cyclohedron.
\end{rem}

In low-dimensions we see that both triangulations of a square are
centrally symmetric so that $\cyc^1$ and $\ass^1$ are identical.  A
hexagon has exactly six centrally symmetric triangulations plus six
centrally symmetric partial triangulations, so $\cyc^2$ is a hexagon.
See Figure~\ref{fig:cyclo}.  We can also see $\cyc^2$ as a polytope
contained in the associahedron $\ass^3$.  See
Figure~\ref{fig:cy2-in-ass3}.  The top and bottom vertices of $\ass^3$
are swapped by $\tau$ and the ones fixed by $\tau$ are the six
vertices that appear to be on the equator relative to the north/south
pole through the top and bottom vertices.  The $3$-dimensional
cyclohedron $\cyc^3$ is embedded in the $5$-dimensional associahedron
$\ass^5$.  Its vertices are labelled by the twenty centrally symmetric
triangulations of a regular octogon, it has thirty edges and twelve
$2$-dimensional faces of which four are squares, four are pentagons
and four are hexagons.  

As an immediate consequence of Proposition~\ref{prop:fixed} we have
the following, whose proof is identical to that of
Proposition~\ref{prop:ass-isom-embed}.

\begin{prop}[Isometries and embeddings]\label{prop:cyc-isom-embed}
  Every isometry of the standard regular polygon $P_{2n}$ extends to an
  isometry of the standard cyclohedron $\cyc^{n-1}$, which means
  that the dihedral group of size $4n$ acts isometrically on
  $\cyc^{n-1}$.  In addition, the natural inclusion map $P_{2n} \into
  P_{4n}$ which sends the vertices of $P_{2n}$ to every other vertex of
  $P_{4n}$ induces an isometric inclusion of $\cyc^{n-1}$ into
  $\cyc^{2n-1}$ as one of its faces.
\end{prop}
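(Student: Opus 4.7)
The plan is to deduce both assertions from the associahedral case (Proposition~\ref{prop:ass-isom-embed}) together with the identification of $\cyc^{n-1}$ as the $\tau$-fixed subpolytope of $\ass^{2n-3}$ provided by Proposition~\ref{prop:fixed}. The crucial observation is that the $\pi$-rotation $\tau$ of $P_{2n}$ is central in the full dihedral group of isometries of $P_{2n}$, and that under the canonical inclusion $P_{2n}\into P_{4n}$ the central symmetry $\tau_{2n}$ is compatible with $\tau_{4n}$ in a precise equivariant sense.

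For the first assertion I will take any isometry $\sigma$ of $P_{2n}$ and extend it to a coordinate-permuting isometry, again called $\sigma$, of $\ass^{2n-3}\subset\R^{2n}$ via Proposition~\ref{prop:ass-isom-embed}. Since $\tau$ lies in the center of the dihedral group of size $4n$ acting on $P_{2n}$, the coordinate permutations $\sigma$ and $\tau$ commute, so $\sigma$ preserves the fixed subspace of $\tau$ in $\R^{2n}$. Intersecting with $\ass^{2n-3}$ and invoking Proposition~\ref{prop:fixed} then shows that $\sigma$ restricts to an isometry of $\cyc^{n-1}$, yielding the desired action of the dihedral group of size $4n$.

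For the second assertion I will use the associahedral embedding $\ass^{2n-3}\into\ass^{4n-3}$ furnished by Proposition~\ref{prop:ass-isom-embed}, which in coordinates is the inclusion $\R^{2n}\into\R^{4n}$ followed by translation by the constant vector that records the $2n$ exterior triangles of $P_{4n}\setminus P_{2n}$. The central symmetries $\tau_{2n}$ and $\tau_{4n}$ act by swapping $x_i\leftrightarrow x_{i+n}$ in $\R^{2n}$ and $x_j\leftrightarrow x_{j+2n}$ in $\R^{4n}$, and these swaps are compatible under the even-index inclusion. The translation vector is itself $\tau_{4n}$-invariant because the $2n$ exterior triangles come in $\pi$-symmetric pairs. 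Hence the associahedral embedding carries the $\tau_{2n}$-fixed subpolytope isometrically into the $\tau_{4n}$-fixed subpolytope, producing the desired inclusion $\cyc^{n-1}\into\cyc^{2n-1}$. The image is a face of $\cyc^{2n-1}$ because it equals the intersection of $\cyc^{2n-1}$ with a face of $\ass^{4n-3}$ corresponding to a centrally symmetric partial triangulation of $P_{4n}$, namely the one whose diagonals are the images of the boundary edges of $P_{2n}$.

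The main obstacle I anticipate is the bookkeeping in the second assertion: one must verify that the exterior triangulation implicit in the secondary polytope translation is centrally symmetric in $P_{4n}$ and that the corresponding face of $\ass^{4n-3}$ meets the fixed set of $\tau_{4n}$ in exactly the image of $\cyc^{n-1}$, rather than in something strictly larger. Both points follow cleanly from the $\pi$-symmetry built into the inclusion $P_{2n}\into P_{4n}$, but they are precisely what turns the statement from a formal consequence of Proposition~\ref{prop:ass-isom-embed} into an actual proof.
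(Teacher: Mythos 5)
Your proposal is correct and follows essentially the same route as the paper, which offers no separate argument but simply declares the result an immediate consequence of Proposition~\ref{prop:fixed} with a proof identical to that of Proposition~\ref{prop:ass-isom-embed}. Your write-up just makes explicit the $\tau$-equivariance bookkeeping (centrality of the $\pi$-rotation in the dihedral group, $\pi$-symmetry of the exterior triangles and of the translation vector, and the face condition) that the paper leaves implicit.
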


%%%%%%%%%%%%%%%%%%%%%%%%%%%%%%%%%%%%%%%%%%%%%%
\section{Farey tessellations}\label{sec:farey}
%%%%%%%%%%%%%%%%%%%%%%%%%%%%%%%%%%%%%%%%%%%%%%

In this section we describe the well-known Farey tessellation of the
hyperbolic plane and some slight variations.  We begin with the
traditional version.

\begin{defn}[Rational Farey tessellation]
  As is well-known, the group $PSL_2(\Z)$ naturally acts on the circle
  $\RP^1$ sitting inside the $2$-sphere $\CP^1$ and it acts
  transitively on points in $\QP^1$.  In addition, the action
  preserves the orientation of the circle $\RP^1$ and thus stabilizes
  each hemisphere of $\CP^1$.  In other words, $PSL_2(\Z)$ acts on the
  upper-half plane model of the hyperbolic plane.  The orbit of the
  bi-infinite geodesic connecting the points $0 = \frac{0}{1}$ and
  $\infty = \frac{1}{0}$ in the boundary under this action divides the
  plane into geodesics and ideal triangles.  This is what we call the
  \emph{rational Farey tessellation of the upper-half plane $\U$} and
  a portion of it is shown in Figure~\ref{fig:rat-upper}.
\end{defn}

\begin{figure}
  \begin{tikzpicture}[scale=3]
    \fill[color=blue!20] (0,0)--(4cm,0)--(4cm,1cm)--(0,1cm)--cycle;
    \foreach \x in {1,2,3,4} {\uarc{\x}{1}}
    \foreach \x in {1,2,3,4,5,6,7,8} {\uarc{.5*\x}{1/2}}
    \foreach \x in {1,2,3,4} {\uarc{\x}{1/3} \uarc{\x}{1/4} \uarc{\x}{1/5}}
    \foreach \x in {1,3,5,7} {\uarc{\x/2}{1/6} \uarc{\x/2}{1/10}}
    \foreach \x in {1,4,7,10} {\uarc{\x/3}{1/3} \uarc{\x/3}{1/12}}
    \foreach \x in {2,5,8,11} {\uarc{\x/3}{1/6} \uarc{\x/3}{1/15}}
    \foreach \x in {1,5,9,13} {\uarc{\x/4}{1/4} \uarc{\x/4}{1/20}}
    \foreach \x in {3,7,11,15} {\uarc{\x/4}{1/12}}
    \foreach \x in {1,6,11,16} {\uarc{\x/5}{1/5}}
    \foreach \x in {2,7,12,17} {\uarc{\x/5}{1/15}}
    \foreach \x in {3,8,13,18} {\uarc{\x/5}{1/10}}
    \foreach \x in {4,9,14,19} {\uarc{\x/5}{1/20}}
    \foreach \x in {0,1,2,3,4} {\draw[-] (\x,0) -- (\x,1cm);}
    \draw[-] (0,0) -- (4,0);
    \foreach \x in {0,1,2,3,4} {\upt{\x}{1}}
    \foreach \x in {1,3,5,7} {\upt{\x}{2}}
    \foreach \x in {1,2,4,5,7,8,10,11} {\upt{\x}{3}}
  \end{tikzpicture}
\caption{A portion of the rational Farey tessellation of the
  upper-half plane $\U$. In the interval between $0$ and $4$, the arcs
  where both endpoints have denominator at most $5$ are drawn and the
  points with denominator at most $3$ are
  labeled.\label{fig:rat-upper}}
\end{figure}
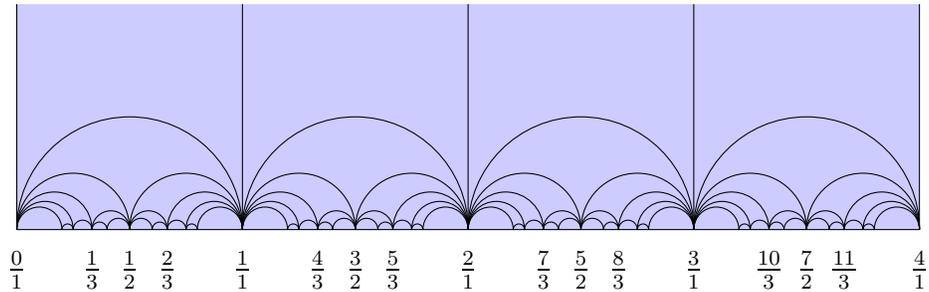

The dyadic Farey tessellation of the upper-half plane $\U$ is
combinatorially equivalent to the rational Farey tessellation, but
metrically distinct.

\begin{defn}[Dyadic Farey tessellation]
  The \emph{dyadic Farey tessellation of the upper-half plane $\U$} is
  formed by first connecting each integer $n$ in the boundary to
  $\infty = \frac{1}{0}$ and to the integer $n+1$.  Afterwards,
  intervals in the boundary are iteratively evenly subdivided and
  additional ideal triangles are drawn.  See
  Figure~\ref{fig:dyadic-upper}.  Instead of having all rational
  numbers as endpoints, the modified tessellation has endpoints at the
  dyadic rationals $\Z[\frac{1}{2}]$ plus $\infty$.
\end{defn}

Although this version looks more symmetric than the rational Farey
tessllation, it is the rational version which is invariant under a
vertex transitive group of hyperbolic isometries. 

\begin{rem}[Literature]\label{rem:literature}
  Both versions have been used to study topics related to the focus of
  this article.  The dyadic version of the Farey tessellation is
  closely related to the Belk and Brown forest diagrams for elements
  of Thompson's group $F$ \cite{BeBr05} and the rational version is at
  the heart of the cluster algebra structure on the hyperbolic plane
  that Sergey Fomin, Michael Shapiro, and Dylan Thurston produced by a
  series of edge-flips \cite{FoShTh08}.  An edge-flip is a
  modification of a triangulation which removes an edge and then adds
  in the other diagonal of the resulting quadrilateral.  Finite
  sequences of edge-flips lead to what one might call a \emph{finite
    retriangulation}.  It is also worth noting along these lines that
  the theory of cluster algebras of finite type is a major source of
  polytopal realizations of both associahedra and cyclohedra, although
  they are distinct from the realizations that we are using.
\end{rem}

Of more immediate interest are the analogous tessellations of the disc
model.

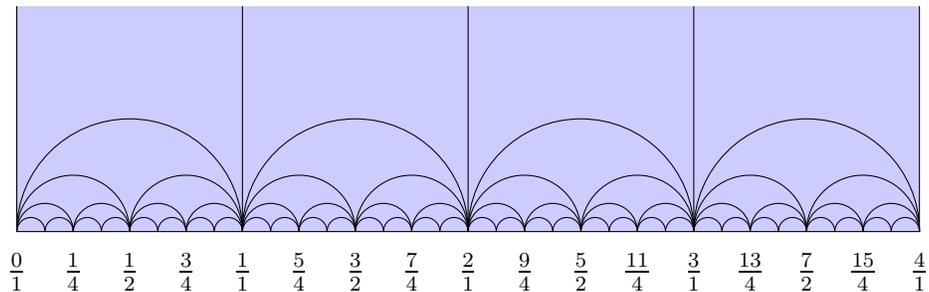
\begin{figure}
  \begin{tikzpicture}[scale=3]
    \fill[color=blue!20] (0,0)--(4cm,0)--(4cm,1cm)--(0,1cm)--cycle;
    \foreach \x in {1,2,3,4} {\uarc{\x}{1}}
    \foreach \x in {1,2,3,4,5,6,7,8} {\uarc{.5*\x}{1/2}}
    \foreach \x in {1,2,3,...,16} {\uarc{.25*\x}{1/4}}
    \foreach \x in {1,2,3,...,32} {\uarc{.125*\x}{1/8}}
    \foreach \x in {0,1,2,3,4} {\draw[-] (\x,0) -- (\x,1cm);}
    \draw[-] (0,0) -- (4,0);
    \foreach \x in {0,1,2,3,4} {\upt{\x}{1}}
    \foreach \x in {1,3,5,7} {\upt{\x}{2}}
    \foreach \x in {1,3,5,7,9,11,13,15} {\upt{\x}{4}}
  \end{tikzpicture}
\caption{A portion of the dyadic Farey tessellation of the upper-half
  plane $\U$.  In the interval between $0$ and $4$, the arcs where
  both endpoints have denominator at most $8$ are drawn and the points
  with denominator at most $4$ are labeled.\label{fig:dyadic-upper}}
\end{figure}

\begin{defn}[Tessellations of the disc]
  The \emph{rational Farey tessellation of the unit disc $\D$} is the
  image of the rational Farey tessellation of the upper-half plane
  model when translated to the Poincar\'e disc model by the linear
  fractional transformation that sends $0$, $1$ and $\infty$ to $1$,
  $i$ and $-1$, respectively.  Under this transformation, the point
  $\frac{1}{3}$ is sent to the point $\frac{3}{5} + \frac{4}{5}i$,
  which lies on the unit circle precisely because $3^2 + 4^2 = 5^2$,
  and more generally this transformation establishes a bijection
  between the points in $\QP^1$ and the rational points on the unit
  circle viewed as rescaled Pythagorean triples.  The \emph{dyadic
    Farey tessellation of the unit disc $\D$} is a combinatorially
  equivalent but metrically distinct tessellation where the
  modifications are similar to the ones described above.  After
  placing the inital geodesic connecting $1$ and $-1$, additional
  boundary points are added by evenly dividing boundary arcs and
  adding in a new ideal triangle.  We call the arcs that occur in the
  dyadic Farey tessellation, \emph{dyadic Farey arcs}.  Both the
  rational and the dyadic Farey tessellations of the unit disc $\D$
  are shown in Figure~\ref{fig:farey-disc}.  As with the upper-half
  plane tessellations, the dyadic version appears more symmetric, but
  it is the rational version which is invariant under a vertex
  transitive group of hyperbolic isometries.  
\end{defn}

\begin{figure}
  \begin{tikzpicture}[scale=2]
    \begin{scope}[xshift=-1.25cm]
    \fill[color=blue!20] (0,0) circle (1cm);
    \draw (0,0) circle (1cm);
    \draw[-] (-1,0) -- (1,0);
    \foreach \x in {-2,-1,0,1} {\darc{90*\x}{90}}
    \foreach \xs in {-1,1} {
      \foreach \ys in {-1,1} {
        \begin{scope}[xscale=\xs,yscale=\ys]
          \rarc{2} \rarc{3} 
          \rarc{4} \rarc{5}
          \rrarc{5}{12/5} 
          \rrarc{3}{4/3}
          \rrarc{8}{4/3}
          \rrarc{7}{3/4}
          \rrarc{13}{8/15}
          \rrarc{21}{5/12}
        \end{scope}
      }
    }
    \end{scope}
    \begin{scope}[xshift=1.25cm]
      \fill[color=blue!20] (0,0) circle (1cm);
      \draw (0,0) circle (1cm);
      \draw[-] (-1,0) -- (1,0);
      \foreach \x in {-2,-1,0,1} {\darc{90*\x}{90}}
      \foreach \x in {-5,-4,...,2} {\darc{45*\x}{45}}
      \foreach \x in {-11,-10,...,4} {\darc{22.5*\x}{22.5}}
      \foreach \x in {-23,-22,...,8} {\darc{11.25*\x}{11.25}}
    \end{scope}
  \end{tikzpicture}
\caption{A portion of the rational Farey tessellation of the unit disc
  $\D$ is shown on the left and a portion of the dyadic Farey
  tessellation of the unit disc $\D$ is shown on the
  right.\label{fig:farey-disc}}
\end{figure}
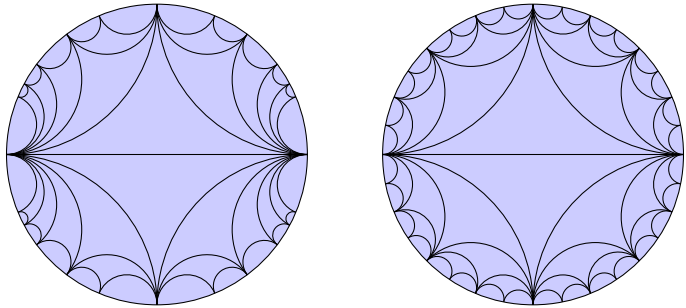

\begin{rem}[Tessellations and triangulations]\label{rem:tess-tri}
  The Farey tessellation, in any of its various forms, can also be
  viewed as a $2$-dimensional simplicial complex.  One adds a vertex
  for each endpoint of a bi-infinite geodesic (indexed by $\Q P^1 = \Q
  \cup \{\infty\}$ in the rational case and by $\Z[\frac12] \cup
  \{\infty\}$ in the dyadic case), an edge for every bi-infinite
  geodesic and a triangle for each complementary region.  Note that
  each vertex link in this $2$-complex looks like the standard
  simplicial structure on $\R$ with one vertex for each integer and
  edges connecting adjacent integers.  The group of combinatorial
  automorphisms of the reals with this cell structure is the infinite
  dihedral group and the combinatorial automorphisms of the full
  $2$-complex associated to the Farey tessellation is the non-oriented
  version of $PSL_2(\Z) \rtimes \Z_2$ which can be identified with
  $PGL_2(\Z)$.  One way to see this identification is as follows.  The
  natural action of $PGL_2(\Z)$ on $\C P^1$ is orientation-preserving.
  The images of the positive imaginary axis under this action form two
  copies of the Farey tessellation, one in the upper-half plane and
  another in the lower-half plane.  If we identify numbers with their
  complex conjugates, then $PGL_2(\Z)$ acts on the upper-half plane
  and those elements represented by matrices with a negative
  determinant become the orientation-reversing isometries of the Farey
  tessellation.
\end{rem}

And finally, notice that when the dyadic Farey tessellation is
depicted in the Klein disc model of the hyperbolic plane, it looks
like a nested union of standard $n$-sided regular polygons where $n =
2^k$ for all $k>1$.  It is this close connection between standard
regular polygons and the dyadic Farey tessellation of the unit disc
which enables us to construct an infinite-dimensional version of a
cyclohedron and to compute the structure of its automorphism group.

%%%%%%%%%%%%%%%%%%%%%%%%%%%%%%%%%%%%%%%%%%%%%%%%%%%%%%%%%%%%
\section{Infinite-dimensional polytopes}\label{sec:infinite}
%%%%%%%%%%%%%%%%%%%%%%%%%%%%%%%%%%%%%%%%%%%%%%%%%%%%%%%%%%%%

In this section we define what we mean by an infinite-dimensional
polytope and illustrate the definition by constructing the polytopes
that we call $\ass^\infty$ and $\cyc^\infty$.

\begin{defn}[Infinite-dimensional polytopes]
  Let $n_1 < n_2 < \cdots$ be an increasing list of positive integers,
  let $P^{n_i}$ be an $n_i$-dimensional metric polytope and for each
  $i$ select a specific inclusion map $P^{n_i} \into P^{n_{i+1}}$
  which isometrically embeds $P^{n_i}$ as a face of $P^{n_{i+1}}$.
  Finally, let $P^\infty$ denote the union of such a nested
  sequence. We call $P^\infty$ an \emph{infinite-dimensional
    polytope}.
\end{defn}

We believe the name is reasonable because these infinite-dimensional
polytopes share many properties with finite-dimensional polytopes.
For example, infinite-dimensional polytopes as defined here are the
convex hull of a countable set of points in a countable direct sum of
copies of $\R$ with only finitely many points in any
finite-dimensional affine subspace.  Also, the union $P^\infty$ has a
well-defined set of faces and its face lattice is the union of the
nested face lattices for the $P^{n_i}$.  Moreover, in the same way
that every finite polytope can be viewed as a contractible regular
cell complex with one cell for each element of the face lattice,
$P^\infty$ is a contractible regular cell complex with one cell for
each element in its face lattice.  Contractibility of the union is
just a special case of the standard fact that a cell complex
constructed as a nested union of contractible cell complexes is itself
contractible. (Recall the standard proof: any map of a sphere into the
union has a compact image which is contained in a finite subcomplex
which is contained in one of the finite contractible stages in the
union, and thus this map is homotopic to a constant map.  This means
that all of the homotopy groups of the union are trivial and by
Whitehead's theorem the union is contractible.)

\begin{figure}
  \begin{tikzpicture}[node distance=15mm]
    \node(a1)  {$\ass^1$}; 
    \node(a5)  [right of=a1]  {$\ass^5$}; 
    \node(a13) [right of=a5]  {$\ass^{13}$}; 
    \node(ad1) [right of=a13] {$\cdots$}; 
    \node(an)  [right of=ad1] {$\ass^{2n-3}$}; 
    \node(ad2) [right of=an]  {$\cdots$}; 
    \node(ai)  [right of=ad2] {$\ass^\infty$}; 
    \node(c1)  [below of=a1]  {$\cyc^1$}; 
    \node(c3)  [right of=c1]  {$\cyc^3$};
    \node(c7)  [right of=c3]  {$\cyc^7$};
    \node(cd1) [right of=c7]  {$\cdots$};
    \node(cn)  [right of=cd1]  {$\cyc^{n-1}$};
    \node(cd2) [right of=cn]  {$\cdots$};
    \node(ci)  [right of=cd2]  {$\cyc^\infty$};
    \node (ae) at ($(ad2)!.5!(ai)$) {$=$};
    \node (ce) at ($(cd2)!.5!(ci)$) {$=$};
    \begin{scope}[right hook->]
      \draw (a1)--(a5); \draw (a5)--(a13); \draw (a13)--(ad1); 
      \draw (ad1)--(an); \draw (an)--(ad2); 
      \draw (c1)--(c3); \draw (c3)--(c7); \draw (c7)--(cd1); 
      \draw (cd1)--(cn); \draw (cn)--(cd2); 
      \draw (c3)--(a5); \draw (c7)--(a13); 
      \draw (cn)--(an); \draw (ci)--(ai); 
    \end{scope}
    \draw[double] (c1)--(a1); 
  \end{tikzpicture}
  \caption{The nested embeddings of finite-dimensional polytopes used
    to construct $\ass^\infty$ and $\cyc^\infty$.  The integer $n$ in
    the diagram ranges over the proper powers of
    $2$.\label{fig:infinite}}
\end{figure}

\begin{defn}[$\ass^\infty$ and $\cyc^\infty$]
  For each integer $n=2^k$ with $k>1$ we consider the standard regular
  $n$-sided polygon and the corresponding associahedra $\ass^{2n-3}$
  and cyclohedra $\cyc^{n-1}$.  As a consequence of
  Propositions~\ref{prop:ass-isom-embed}, \ref{prop:fixed}, and
  ~\ref{prop:cyc-isom-embed} we have the isometric embeddings shown in
  Figure~\ref{fig:infinite} and the unions of these nested embeddings
  define infinite dimensional polytopes that we call the
  \emph{infinite associahedron} $\ass^\infty$ and the \emph{infinite
    cyclohedron} $\cyc^\infty$.
\end{defn}

From this construction as a union of polytopes, it should be clear
that the vertices of $\ass^\infty$ are indexed by finite
retriangulations of the unit disc (in the sense described in
Remark~\ref{rem:literature}) where all but finitely many of the arcs
of the triangulation are dyadic Farey arcs, and the vertices of
$\cyc^\infty$ are those labeled by such finite retriangulations that
are invariant under the $\pi$-rotation of the disc.  In fact, as in
the finite-dimensional case, the $\pi$-rotation of the disc induces an
involution of $\ass^\infty$ and the portion of $\ass^\infty$ fixed by
this involution is~$\cyc^\infty$.

%%%%%%%%%%%%%%%%%%%%%%%%%%%%%%%%%%%%%%%%%%%%%%%%%%
\section{Thompson's group $T$}\label{sec:thompson}
%%%%%%%%%%%%%%%%%%%%%%%%%%%%%%%%%%%%%%%%%%%%%%%%%%

In this section we recall the definition of Richard Thompson's
infinite finitely presented simple group $T$ and its close connection
with the Farey tessellation of the hyperbolic plane.  In 1965 Richard
Thompson defined three groups conventionally denoted $F$, $T$ and $V$.
The group $F$ is the easiest to define and $T$ and $V$ were the first
known examples of infinite finitely-presented simple groups
\cite{CaFlPa76}.  Thompson's group $F$ is the group of
piecewise-linear homeomorphisms of the unit interval where every slope
is a power of $2$ and all of the break points occur at dyadic
rationals.  This group is also often studied in terms of pairs of
rooted finite planar binary trees.  Thompson's group $T$ is an
extension of $F$ where the endpoints of the interval are identified
and certain rotations are allowed.  Since we do not need a detailed
description for our main result, we shall merely sketch the key ideas.

\begin{defn}[Thompson's group $T$]\label{def:T}
  For our purposes, the main result we need about Thompson's group $T$
  is that its elements can be identified with pairs of ideal Farey
  $m$-gons in the dyadic Farey tessellation of the unit disc with an
  indication of how the corners of the one are sent to the corners of
  the other.  This description induces piecewise-linear maps on the
  boundary circle (parametrized by arc length and rescaled dividing
  by $2\pi$) \cite{FuKaSe12,Fo-diss}.  Alternatively, its elements can
  be identified with the group of finite retriangulations and
  renormalizations of the Farey tessellation of the disc.  See Sonja
  Mitchell Gallagher's dissertation for precise definitions and
  further details of this description \cite{Ga13}.
\end{defn}

There are several relevant groups that are closely related to $T$ and
for these it is convenient to introduce the following notation.

\begin{defn}[Atlas notation]\label{def:atlas}
  The atlas of finite groups popularized many simple notational
  conventions for describing groups that are closely related to simple
  groups \cite{CCNPW85}.  For example, when $G$ is a group that has a
  normal subgroup isomorphic to a group $A$ and the quotient group
  $G/A$ is isomorphic to a group $B$, one writes $G = A.B$ and says
  that $G$ has \emph{shape} $A.B$.  When iterated, the convention is
  to left associate.  Thus $G = A.B.C$ means that $G = (A.B).C$,
  i.e. $G$ has a normal subgroup of shape $A.B$ with quotient
  isomorphic to $C$.  Finally, the ATLAS writes $m$ to denote the
  finite cyclic group $\Z_m$.
\end{defn}

One extension of $T$ we wish to discuss is its non-oriented version.

\begin{defn}[Non-oriented $T$]\label{def:non-oriented}
  The map that sends every complex number $x+iy$ to its complex
  conjugate $x-iy$ is an orientation-reversing map of order~$2$ that
  preserves the Farey tessellation of the unit disc.  The
  \emph{non-oriented version of Thompson's group $T$} is the
  automorphism group of $T$ and it is generated by $T$ acting on
  itself by conjugation and the automorphism of $T$ induced by the
  complex conjugation map of the Farey tessellation.  It is denoted
  $T^{no}$ in \cite{Fo-assoc} and its structure is isomorphic to $T
  \rtimes \Z_2$.  In other words, it has shape $T.2$.
\end{defn}

The other groups related to $T$ that we need to discuss are the
subgroups of $T$ which centralize elements of finite order.  These
torsion elements of $T$ are easy to describe and classify.

\begin{defn}[Torsion elements]\label{def:torsion}
  Thompson's group $T$ contains torsion elements of every order and
  all finite cyclic subgroups of the same order are conjugate.  More
  explicitly, torsion elements are created by, essentially, finding an
  $m$-sided ideal polygon in the Farey tessellation whose boundary
  arcs are Farey arcs and then retriangulating this $m$-gon so that it
  looks like a $\frac{2\pi}{k}$ rotation of the original triangulation
  for some $k$ that divides $m$.  This is an element whose order
  divides $k$ and all finite cyclic subgroups of order~$k$ are
  generated by a $\frac{2\pi}{k}$ rotation of some Farey $m$-gon in
  this way.  This characterization can be used to prove that all such
  subgroups are conjugate \cite{BCST09,Fo-diss}.
\end{defn}

In our classification of automorphisms of the infinite cyclohedron, we
need to understand the centralizer of a particular order~$2$ element
in $T$.  A proof of the following result can be found in
\cite[Chapter~$7$]{Ma08}.

\begin{prop}[Centralizers]\label{prop:central}
  Let $g$ be an element in $T$ of order $m$ and let $G$ be the
  centralizer of $g$ in $T$, i.e. the elements in $T$ that commute
  with $g$.  The cyclic group $\Z_m$ generated by $g$ is a normal
  subgroup of $G$ and the quotient group $G/\Z_m$ is isomorphic to $T$
  itself.  In other words, the subgroup $G \subset T$ has shape $m.T$.
\end{prop}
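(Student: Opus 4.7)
The plan is to construct a short exact sequence $1 \to \Z_m \to G \to T \to 1$ by descending through the quotient of $S^1$ by the action of $\langle g\rangle$. Normality of $\langle g\rangle$ in $G$ is immediate: by the definition of centralizer, $g$ lies in the center of $G$, so $\langle g\rangle$ is automatically a central (hence normal) subgroup. To produce the rest of the sequence I would first invoke Definition~\ref{def:torsion}, which asserts that all order-$m$ cyclic subgroups of $T$ are conjugate. Since conjugation preserves centralizers up to isomorphism, I may replace $g$ with a convenient representative, namely the $\tfrac{2\pi}{m}$-rotation of a centered Farey $m$-gon extended to a PL homeomorphism of order $m$ that acts globally on $S^1$ and preserves the Farey tessellation.

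Next I would construct the descent homomorphism $\Phi : G \to T$. The quotient map $\pi : S^1 \to S^1/\langle g\rangle$ is an $m$-fold covering whose base is again a circle carrying a canonical induced Farey structure. By the commutation relation $hg = gh$, every $h \in G$ is a $\langle g\rangle$-equivariant PL homeomorphism of $S^1$; hence it descends to a PL homeomorphism $\bar h$ of the quotient with the same set of slopes, whose break points are the images of those of $h$. Identifying the induced combinatorial structure on the quotient with the standard Farey structure on $S^1$ shows $\bar h \in T$, and the assignment $h \mapsto \bar h$ is a group homomorphism. For the kernel: if $\Phi(h) = \mathrm{id}$ then $h$ preserves every $\langle g\rangle$-orbit, so $h(x) = g^{k(x)}(x)$ for some $k(x) \in \Z/m\Z$; since the $m$ points of an orbit are uniformly separated, $k(x)$ is locally constant, and by connectedness of $S^1$ it is globally constant, giving $\ker \Phi = \langle g\rangle \cong \Z_m$. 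For surjectivity I would lift any $\bar h \in T$ to an equivariant map upstairs by lifting the defining pair of Farey polygons together with their renormalization to $\langle g\rangle$-invariant data, and check that the resulting lift lies in $T$ and commutes with $g$.

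The main obstacle is checking the compatibility of the combinatorial structures under the $m$-fold cover. Concretely, one must verify that the quotient $S^1/\langle g\rangle$, endowed with the Farey data inherited from the total space, is canonically isomorphic as a combinatorial circle to the standard $S^1$ supporting the natural $T$-action, so that the slopes and break points of $\bar h$ actually land in the correct dyadic (or rational) positions demanded by the definition of $T$. The analogous compatibility in the reverse direction is precisely what underwrites the surjectivity of $\Phi$. Once this matching is established, the resulting short exact sequence $1 \to \Z_m \to G \xrightarrow{\Phi} T \to 1$ identifies $G$ as having shape $m.T$, as claimed.
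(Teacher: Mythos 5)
First, a point of reference: the paper does not actually prove Proposition~\ref{prop:central}; it cites Chapter~7 of Matucci's thesis \cite{Ma08}, where the argument is essentially the descent/lifting strategy you outline (conjugate $g$ to a standard combinatorial rotation, pass to the quotient circle, get a central extension $1 \to \Z_m \to G \to T \to 1$). So your overall strategy is the right one, and the easy parts are handled correctly: centrality (hence normality) of $\langle g\rangle$ in $G$, the reduction to a convenient representative via conjugacy of cyclic subgroups (note $C_T(g) = C_T(\langle g\rangle)$, so conjugating the subgroup suffices), and the kernel computation via local constancy of $k(x)$, which works because the $\Z_m$-action on the circle is free.

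The genuine gap is exactly the step you set aside as ``the main obstacle'': identifying the induced structure on $S^1/\langle g\rangle$ with the standard dyadic/Farey structure, which is what makes $\bar h$ land in $T$ and what makes the equivariant lifting (surjectivity of $\Phi$) possible. This is not a routine bookkeeping check --- it is where the content of the proposition lives, and it cannot be done by the naive move of rescaling the quotient circle by a factor of $m$: multiplication by $m$ sends dyadic rationals to dyadic rationals, but its inverse does not when $m$ is not a power of $2$, so the ``obvious'' coordinate on the quotient does not match the dyadic data. One has to use the specific form of the representative $g$ (the combinatorial $\tfrac{2\pi}{m}$-rotation of a Farey $m$-gon, which is a PL map, not a rigid rotation), take a fundamental arc bounded by vertices of that $m$-gon, and exhibit an explicit PL identification of the quotient with the standard circle having dyadic breakpoints and power-of-two slopes (equivalently, argue diagrammatically with $\langle g\rangle$-invariant pairs of Farey polygons and their quotient data, as in \cite{Ma08}). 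Until that identification is constructed, both the claim $\bar h \in T$ and the surjectivity of $\Phi$ are assertions rather than proofs, so the proposal as written establishes only the subgroup $\Z_m \trianglelefteq G$ and an injection $G/\Z_m \hookrightarrow \mathrm{Homeo}^+(S^1/\langle g\rangle)$, not the isomorphism $G/\Z_m \cong T$.
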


The fact that $T$ has proper subgroups of shape $m.T$ for every $m$
means that it is a very self-similar group.

%%%%%%%%%%%%%%%%%%%%%%%%%%%%%%%%%%%%%%%%%%%%%
\section{Automorphism groups}\label{sec:auts}
%%%%%%%%%%%%%%%%%%%%%%%%%%%%%%%%%%%%%%%%%%%%%

In this final section we analyze the structure of the automorphism
group of the infinite polytope $\cyc^\infty$.  We first define what we
mean by an automorphism of an infinite polytope.  For ordinary
finite-dimensional convex polytopes, there are various notions of
equivalence.  Convex polytopes can be considered as metric objects, or
as objects up to equivalence under affine transformations, or as
purely combinatorial objects.  When constructing the infinite
cyclohedron inside the infinite associahedron we adopted the metric
viewpoint, but when we consider its automorphism group we adopt a
combinatorial one.

\begin{defn}[Automorphisms]\label{def:auts}
  By a \emph{combinatorial automorphism} of a convex polytope $P$, in
  finite or infinite dimensions, we mean an automorphism of its
  associated face lattice.  Thus vertices are sent to vertices, edges
  to edges, and so on, but there is no requirement that parallel faces
  be send to parallel faces as there would be if we required the
  existence of an underlying affine transformation of the ambient
  affine space.  The set of all combinatorial automorphisms clearly
  form a group $\aut(P)$, which we call its \emph{combinatorial
    automorphism group}.
\end{defn}

In this notation, the main result proved by the first author in
\cite{Fo-assoc} is the following.

\begin{thm}[Associahedral automorphisms]\label{thm:ass-aut}
  The combinatorial automorphism group of the infinite associahedron
  is isomorphic to $T \rtimes \Z_2$, the non-oriented version of
  Thompson's group $T$.  In other words, $\aut(\ass^\infty)$ has shape
  $T.2$.
\end{thm}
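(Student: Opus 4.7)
The plan is to establish the isomorphism $\aut(\ass^\infty) \cong T \rtimes \Z_2$ by showing both inclusions.  For the inclusion $T \rtimes \Z_2 \into \aut(\ass^\infty)$, I would use Definition~\ref{def:T}, which identifies $T$ with the group of finite retriangulations and renormalizations of the dyadic Farey tessellation $T_0$ of the disc.  Each element of $T$ permutes the dyadic points on the circle, and hence the arcs between them; since each face of $\ass^\infty$ corresponds to a partial triangulation of the disc that agrees with $T_0$ outside some $P_{2^k}$, this action extends canonically to a combinatorial automorphism of the face lattice.  Adjoining the orientation-reversing involution from Definition~\ref{def:non-oriented} gives a faithful homomorphism $T \rtimes \Z_2 \into \aut(\ass^\infty)$, with faithfulness holding because any non-identity element alters at least one arc and hence at least one face.

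For the converse $\aut(\ass^\infty) \into T \rtimes \Z_2$, let $\phi \in \aut(\ass^\infty)$; my goal is to produce a bijection $\sigma$ of the dyadic vertex set of the circle that induces $\phi$.  Since each $\ass^{2^k-3}$ is a simple polytope, its face lattice is dual to the simplicial complex whose vertices are diagonals of $P_{2^k}$ and whose simplices are pairwise non-crossing collections.  Passing to the nested union, the face lattice of $\ass^\infty$ is dual to the simplicial complex $\mathcal{X}$ whose vertices are the arcs with dyadic endpoints in the disc and whose simplices are finite pairwise non-crossing collections.  Thus $\phi$ induces a simplicial automorphism of $\mathcal{X}$.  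The classical rigidity of the arc complex of a convex polygon -- which identifies $\aut(\ass^{n-3})$ with the dihedral group of $P_n$ for $n \geq 5$ -- applied compatibly across the chain of canonical embeddings $P_{2^k} \into P_{2^{k+1}}$ should yield a single bijection $\sigma$ of the dyadic boundary points that preserves or reverses cyclic order.  Finally, $\phi$ must send the Farey vertex $T_0$ to some vertex of $\ass^\infty$, which is itself a finite retriangulation of $T_0$, so $\sigma(T_0)$ differs from $T_0$ on only a finite region: this is precisely the defining characterization of an element of $T \rtimes \Z_2$, with the extra $\Z_2$ accounting for the cyclic-order-reversing case arising from complex conjugation.

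The main obstacle is the rigidity-and-gluing step.  Although each finite sub-polygon $P_{2^k}$ supplies a dihedral rigidity statement, $\phi$ is not required to preserve any particular canonical sub-associahedron $\ass^{2^k-3}$ inside $\ass^\infty$, so one cannot merely restrict $\phi$ to finite polytopes.  Instead, one must show that $\phi$'s action on any finite family of arcs is forced to match a dihedral symmetry of some sufficiently large $P_{2^j}$ containing all of them, and that the canonical inclusions (placing vertices of $P_{2^k}$ as every other vertex of $P_{2^{k+1}}$) force these local dihedral symmetries at different scales to glue into a single global cyclic bijection.  Carrying out this coherence argument across all scales, while verifying that the resulting $\sigma$ is piecewise-dyadic in the sense required by the definition of $T$, is the principal technical content of the proof.
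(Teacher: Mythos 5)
You should first note that the paper does not actually prove this theorem: it quotes it as the main result of \cite{Fo-assoc}, and its ``proof'' consists of the citation together with a sketch of the key mechanism from \cite{FoNg12} --- reconstruct the tessellation attached to a vertex from the $2$- and $3$-cells in its neighborhood (squares detect flips of arcs not bounding a common triangle, pentagons detect flips of arcs that do), conclude that a combinatorial automorphism is determined by its action on a single vertex link, and check that every feasible local symmetry is realized by an element of the non-oriented group $T\rtimes\Z_2$; this is also the template redone for $\cyc^\infty$ in the proof of Proposition~\ref{prop:extend}. Your easy direction, the faithful action of $T\rtimes\Z_2$ on $\ass^\infty$, is fine. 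The converse direction as you outline it has a genuine gap, in two respects.

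First, the duality claim is false as stated. Under the canonical inclusion $\ass^{2^k-3}\into\ass^{2^{k+1}-3}$, the face labelled by a noncrossing set $A$ of diagonals of $P_{2^k}$ is identified with the face labelled by $A$ together with all $2^k$ images of the boundary edges of $P_{2^k}$; on dual complexes this map is a join with a fixed simplex, not a subcomplex inclusion, so the nested union of face lattices is not anti-isomorphic to your complex $\mathcal{X}$ of finite noncrossing arc systems. Faces of $\ass^\infty$ correspond to partial triangulations of the disc that are cofinitely Farey; in particular $\ass^\infty$ has no facets at all, whereas the vertices of $\mathcal{X}$ (single arcs) would have to correspond to facets, and the empty face of $\mathcal{X}$ to a top element that does not exist. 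Second, and more seriously, the ``rigidity-and-gluing'' step that you explicitly defer is not a technical remainder but the entire content of the theorem: an automorphism $\phi$ of the face poset need not stabilize any finite stage or any canonical copy of $\ass^{2^k-3}$, so finite dihedral rigidity of the arc complex of $P_{2^k}$ gives you nothing to glue until you have shown that $\phi$'s action on the edges at a vertex comes from a bijection of the arcs of the corresponding triangulation compatible with the circular order --- and that is precisely what the local square/pentagon analysis of \cite{FoNg12} establishes. Your concluding step (the image of the Farey vertex is a finite retriangulation, hence $\sigma\in T\rtimes\Z_2$ by Definition~\ref{def:T}) is fine once such a $\sigma$ exists, but as written the proposal is a plan whose central step is acknowledged rather than carried out, so it does not yet constitute a proof of Theorem~\ref{thm:ass-aut}.
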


It might seem remarkable that the union of a nested sequence of
polytopes which only have dihedral symmetries would have such a large
and complicated automorphism group, but the situation is analogous to
the way in which the intervals $[-n,n]$ only have two symmetries each
(because the origin must be fixed) while their union, the real line,
has a much larger symmetry group which includes translations.  The
corresponding fact for Thompson's group $T$, proved by the first
author in \cite{Fo11}, is that it contains the group $PSL_2(\Z)$ as an
undistorted subgroup.  When $PSL_2(\Z)$ is viewed as the group of
orientation-preserving maps of the hyperbolic plane that preserve the
rational Farey tessellation, the connection with $T$ is clear and the
undistorted natural of the inclusion map is understandable.  

A key step in the proof of Theorem~\ref{thm:ass-aut} is the analysis
of (in our language) the structure of (the $3$-skeleton of) a
neighborhood of a vertex in $\ass^\infty$.  The first author and
Maxime Nguyen show in \cite{FoNg12} that if $\phi$ and $\psi$ are two
combinatorial automorphisms of $\ass^\infty$ and there is a vertex $v$
such that $\phi(v) = w = \psi(v)$ and the $\phi$ and $\psi$ agree on
the link of $v$, then $\phi = \psi$.  In other words, combinatorial
automorphisms are completely determined by their behavior in the
neighborhood of a single vertex.  Moreover, each feasible local
isometry is realized by an element of the non-oriented version of
Thompson's group $T$, so these are the only combinatorial
automorphisms.  What we need is an analogous fact.

\begin{prop}[Extending automorphisms]\label{prop:extend}
  Every combinatorial automorphism of the infinite cyclohedron
  $\cyc^\infty$ comes from an automorphism of the infinite
  associahedron $\ass^\infty$ that commutes with the $\pi$-rotation
  $\tau$.
\end{prop}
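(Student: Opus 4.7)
The plan is to fix a vertex $v$ of $\cyc^\infty$, which is simultaneously a centrally symmetric finite retriangulation $t$ of the disc and a vertex of $\ass^\infty$, and to construct a $\tau$-equivariant extension $\tilde\phi\in\aut(\ass^\infty)$ of the given $\phi\in\aut(\cyc^\infty)$. Set $w=\phi(v)$. The edges of $\ass^\infty$ at $v$ are indexed by the individual diagonals of $t$, while the edges of $\cyc^\infty$ at $v$ are indexed by $\tau$-orbits of those diagonals, each of which is either a single $\tau$-fixed diameter or a pair $\{d,\tau d\}$; in the pair case, the edge of $\cyc^\infty$ sits as the diagonal of the square $2$-face of $\ass^\infty$ obtained by removing both $d$ and $\tau d$. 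I define a local bijection $\mu$ from diagonals of $t$ to diagonals of the triangulation $w$ by lifting $\phi$ orbit by orbit: on diameter orbits the lift is forced, and on each pair orbit I pick either one of the two $\tau$-equivariant lifts. In every case $\mu(\tau d)=\tau(\mu(d))$.

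The main step is to verify that $\mu$ is a feasible local configuration in the sense of the local rigidity theorem of Fossas Tenas and Nguyen \cite{FoNg12}, which says that an automorphism of $\ass^\infty$ is determined by its action at a single vertex and on the link of that vertex, and that every feasible local choice extends to a unique global automorphism. The feasibility conditions concern the combinatorial types of the low-dimensional faces of $\ass^\infty$ incident to $v$, each of which is indexed by a small subset of diagonals of $t$. The key observation is that taking the $\tau$-closure of such a subset produces a $\tau$-invariant set of diagonals, hence a face of $\cyc^\infty$ at $v$ whose combinatorial type is preserved by $\phi$. A case analysis, split according to which involved diagonals are diameters, which are $\tau$-related, and which share a polygon vertex, shows that the type of this $\cyc^\infty$-face determines the types of the individual $\ass^\infty$-faces inside it, so $\mu$ transports face types correctly. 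This case analysis is the main technical obstacle.

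Given that $\mu$ is feasible, \cite{FoNg12} yields a unique $\tilde\phi\in\aut(\ass^\infty)$ extending $\mu$. The two automorphisms $\tau\circ\tilde\phi$ and $\tilde\phi\circ\tau$ agree at $v$ (because $w$ is $\tau$-fixed) and induce the same bijection on the link of $v$ (by the $\tau$-equivariance of $\mu$), so the uniqueness clause of \cite{FoNg12} forces $\tilde\phi\circ\tau=\tau\circ\tilde\phi$. Consequently $\tilde\phi$ preserves the fixed set of $\tau$, namely $\cyc^\infty$, and restricts to an automorphism $\phi'$ of $\cyc^\infty$.

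To finish, I verify $\phi'=\phi$ by induction along the $1$-skeleton of $\cyc^\infty$ starting at $v$. The two maps agree at $v$ and at every neighbor of $v$ in $\cyc^\infty$: on a diameter-type edge the action of $\tilde\phi$ coincides with $\mu$ and hence with $\phi$, while on a pair-type edge $\tilde\phi$ traverses the corresponding square $2$-face of $\ass^\infty$ by successive flips of $d$ and $\tau d$, whose net effect at the opposite vertex is precisely the centrally symmetric flip of the orbit $\{d,\tau d\}$ that $\phi$ implements. Since $\cyc^\infty$ is connected and a combinatorial automorphism of a polytope is determined by its action on vertices, propagating this agreement along the $1$-skeleton gives $\phi'=\phi$ everywhere, completing the proof.
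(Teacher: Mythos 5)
Your overall strategy (work at a single vertex, lift the edge data to the level of individual diagonals, invoke the local rigidity theorem of \cite{FoNg12} to extend, then use uniqueness to get commutation with $\tau$) is the right shape, and it is close in spirit to the paper's proof, which is explicitly modeled on Section~4 of \cite{FoNg12}. But there is a genuine gap: the step you label ``the main technical obstacle'' is not an obstacle you may defer --- it \emph{is} the content of the proposition, and it is exactly what the paper's proof supplies. Before you can even define $\mu$ you must know that $\phi$ sends the diameter-flip edge at $v$ to the diameter-flip edge at $w$ and pair-flip edges to pair-flip edges; a priori a combinatorial automorphism of $\cyc^\infty$ could mix these. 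The paper settles this by classifying the $2$-cells at a vertex (squares, pentagons, hexagons, according to whether the flipped arcs avoid, partially share, or involve a triangle adjacent to the diameter) and observing that $v$ lies on exactly two hexagons, which share a unique edge --- the diameter flip --- so that edge is combinatorially detectable; pentagons then detect which arcs bound a common triangle, and the $3$-cells detect shared endpoints, allowing the whole tessellation to be reconstructed from the face poset near $v$. Without some version of this reconstruction, your feasibility check for $\mu$ has no starting point.

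A second, related flaw is your construction of $\mu$ by choosing, \emph{independently on each pair orbit}, one of the two $\tau$-equivariant lifts. Independent choices are generically infeasible: if $d_1$ and $d_2$ bound a common triangle of $t$, feasibility forces the lifts on the orbits $\{d_1,\tau d_1\}$ and $\{d_2,\tau d_2\}$ to be chosen coherently (and the orbits adjacent to the diameter are pinned by the forced lift of the diameter), so in fact there are only two coherent global lifts, differing by composition with $\tau$. Proving that a coherent lift exists is again the tessellation-reconstruction argument. Finally, your closing induction along the $1$-skeleton does not close as stated: knowing $\phi'$ and $\phi$ agree at $v$ and at its neighbors does not by itself give agreement at distance two; you need the cyclohedral analogue of the rigidity statement ``an automorphism of $\cyc^\infty$ fixing a vertex and acting trivially on its link is the identity,'' which is precisely what the paper extracts from the same local reconstruction. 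So the skeleton of your argument is fine, but each of its three load-bearing steps currently rests on the unproved combinatorial analysis that constitutes the paper's actual proof.
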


\begin{proof}
  The proof is similar to the one in Section~$4$ of \cite{FoNg12} but
  with minor variations.  The vertices of $\cyc^\infty$ can be
  identified with finite centrally symmetric retriangulations of the
  Farey tessellation.  The edges of $\cyc^\infty$ correspond to a flip
  either of the diagonal edge (the unique arc that passes through the
  origin of the disc) or a pair of flips of arcs symmetric with
  respect to the $\pi$-rotation.  Purely from the combinatorics of the
  flips in the neighborhood of a fixed vertex, one can recover the
  underlying structure of the tessellation.  For example, the
  $2$-cells that contain a fixed vertex $v$ are either squares,
  pentagons or hexagons.  Squares correspond to flips involving arcs
  that do not bound a common triangle.  Pentagons correspond to flips
  involving two pairs of centrally symmetric arcs where one arc in the
  first pair bounds a common triangle with one arc in the second pair.
  Hexagons correspond to the diagonal flip and one of the two flips
  involving centrally symmetric arcs bounding a common triangle with
  the diagonal arc.  In other words, the vertex $v$ belongs to the
  boundary of exactly two hexagons and these hexagons share a unique
  edge, the edge corresponding to the diagonal flip.

  Building from this identification of the diagonal flip, the
  pentagons can be used to identify arcs sharing a common triangle and
  looking at how they are included into the $3$-cells lets one see
  which endpoint they share.  Continuing in this way, one uses the
  purely combinatorial data in the low dimensional skeleton of the
  neighborhood of a vertex $v$ to identify each edge leaving $v$ with
  the flip of a specific arc (or centrally symmetric pair of arcs) in
  the Farey tesselation corresponding to $v$.  And once one has
  reconstructed the tessellation, we see that combinatorial
  automorphisms are once again completely determined by their behavior
  in the neighborhood of a single vertex.  Moreover, each feasible
  local isometry is realized by an element of the non-oriented version
  of Thompson's group $T$ which commutes with the $\pi$-rotation
  $\tau$, so these are the only combinatorial automorphisms.
\end{proof}

We are now ready to prove our main result.

\begin{thm}[Cyclohedral automorphisms]
  The combinatorial automorphism group of the infinite cyclohedron is
  isomorphic to the centralizer of a particular involution in the
  combinatorial automorphism group of the infinite associahedron
  $\aut(\ass^\infty)$.  And since the centralizer of an involution in
  $T$ is a group isomorphic to a degree~$2$ central extension of $T$,
  the group $\aut(\cyc^\infty)$ has shape $2.T.2$.
\end{thm}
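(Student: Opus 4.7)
The plan is to identify $\aut(\cyc^\infty)$ with a centralizer inside $\aut(\ass^\infty)$ and then unwind the structure of that centralizer using the results already assembled. The two key ingredients are Proposition~\ref{prop:extend}, which lifts any cyclohedral automorphism to $\ass^\infty$, and Proposition~\ref{prop:central} applied with $m=2$, which computes the centralizer of an involution inside $T$.

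First I would show that the assignment sending $\phi \in \aut(\cyc^\infty)$ to its $\tau$-equivariant lift $\tilde\phi \in \aut(\ass^\infty)$ is an isomorphism onto $C_{\aut(\ass^\infty)}(\tau)$. Existence of a lift is Proposition~\ref{prop:extend}. For uniqueness, if $\tilde\phi_1$ and $\tilde\phi_2$ both lift $\phi$, then $\psi := \tilde\phi_2^{-1}\tilde\phi_1$ is a $\tau$-equivariant automorphism of $\ass^\infty$ that restricts to the identity on the fixed subcomplex $\cyc^\infty$. The link-determines-automorphism principle invoked in the proof of Proposition~\ref{prop:extend} forces $\psi = \mathrm{id}$: fixing every flip at any centrally symmetric vertex $v$ of $\cyc^\infty$ already fixes every flip at $v$ in $\ass^\infty$, since the combinatorial reconstruction in that proof identifies each individual dyadic Farey arc at $v$ from the square, pentagon, and hexagon cells around $v$. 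In the other direction, restricting a $\tau$-equivariant automorphism of $\ass^\infty$ to $\cyc^\infty$ supplies the inverse.

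Next I would determine the centralizer structurally. By Theorem~\ref{thm:ass-aut} we may write $\aut(\ass^\infty) = T \rtimes \langle\sigma\rangle$, where $\sigma$ is the involution induced by complex conjugation. The rotation $\tau$ is an element of $T$ (it acts on $S^1 = \R/\Z$ as the translation $x \mapsto x+\tfrac{1}{2}$, manifestly an order-$2$ element of $T$), so Proposition~\ref{prop:central} with $m=2$ gives $C_T(\tau)$ of shape $2.T$, the normal $\Z_2$ being generated by the central element $\tau$ itself. A direct check shows $\sigma$ commutes with $\tau$: for any $z \in \C$,
\[
\sigma\tau(z) \;=\; \overline{-z} \;=\; -\overline{z} \;=\; \tau\sigma(z).
\]
Consequently $g\sigma$ centralizes $\tau$ exactly when $g \in T$ does, so
\[
C_{\aut(\ass^\infty)}(\tau) \;=\; C_T(\tau) \rtimes \langle\sigma\rangle,
\]
which has shape $(2.T).2 = 2.T.2$, as claimed.

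The main obstacle is the uniqueness half of the first step. A priori a nontrivial $\tau$-equivariant automorphism of $\ass^\infty$ could act trivially on $\cyc^\infty$ while permuting faces transverse to the fixed subcomplex; ruling this out requires showing that the combinatorics around a single centrally symmetric vertex already distinguishes individual Farey arcs rather than merely their $\tau$-orbits. This is exactly the local reconstruction carried out in Proposition~\ref{prop:extend}, and reusing it here is what lets the short centralizer computation above finish the proof.
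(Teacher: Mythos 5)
Your proposal is correct and takes essentially the same route as the paper: reduce via Proposition~\ref{prop:extend} to identifying the stabilizer of $\cyc^\infty$ inside $\aut(\ass^\infty)\cong T\rtimes\Z_2$ with the centralizer of $\tau$, apply Proposition~\ref{prop:central} with $m=2$ to get $C_T(\tau)$ of shape $2.T$, and note that complex conjugation commutes with $\tau$ to conclude shape $2.T.2$. The only difference is that you make explicit the uniqueness of the $\tau$-equivariant lift (i.e.\ that restriction to $\cyc^\infty$ is injective on the centralizer), a point the paper leaves implicit in its appeal to the local reconstruction argument.
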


\begin{proof}
  By Proposition~\ref{prop:extend} it suffices to find which elements
  in $T \rtimes \Z_2$ stabilize the polytope $\cyc^\infty$.  In order
  to send centrally symmetric triangulations to centrally symmetric
  triangulations, the corresponding map on the boundary circle must be
  invariant under the $\pi$-rotation map $\tau$.  This map $\tau$
  represents an element in $T$ of order~$2$ and its centralizer in
  $T$, by Proposition~\ref{prop:central} has shape $2.T$.  This
  represents all of the orientation-preserving elements in $T \rtimes
  \Z_2$ that stabilize $\cyc^\infty$.  Since the orientation-reversing
  complex conjugation map, the one which reflects across the $x$-axis,
  also commutes with the $\pi$-rotation map $\tau$, the full subgroup
  of $\aut(\ass^\infty)$ that stabilizes $\cyc^\infty$ is a semidirect
  product of the centralizer in $T$ and $\Z_2$, a group with shape
  $2.T.2$.
\end{proof}

\subsection*{Acknowledgements.}
This project is the result of a conversation between the authors
during the 2013 Durham symposium on geometric and cohomological group
theory and we would like to thank the organizers of that conference
for providing the institutional environment that helped produce these
results.  The second author would also like to thank Sonja Mitchell
Gallagher who recently completed a dissertation on Thompson's group
$T$ under his supervision.  Several ideas initially discussed with
Sonja are scattered throughout the article.

\newcommand{\etalchar}[1]{$^{#1}$}
\def\cprime{$'$}
\providecommand{\bysame}{\leavevmode\hbox to3em{\hrulefill}\thinspace}
\providecommand{\MR}{\relax\ifhmode\unskip\space\fi MR }
% \MRhref is called by the amsart/book/proc definition of \MR.
\providecommand{\MRhref}[2]{%
  \href{http://www.ams.org/mathscinet-getitem?mr=#1}{#2}
}
\providecommand{\href}[2]{#2}

%%%%%%%%%%%%%%
\end{document}